\documentclass[11pt]{amsart}
\usepackage{fullpage}
\usepackage[active]{srcltx}
\usepackage{amssymb,amsthm,amsmath,amscd}

\usepackage{graphicx} % Required for inserting images
\usepackage{amsfonts}
\usepackage{enumitem,kantlipsum}
\usepackage{amsmath}
\usepackage{amssymb}
\usepackage{amsthm}
\newtheorem{theorem}{Theorem}

\newtheorem{claim}[theorem]{Claim}

\theoremstyle{definition}
\newtheorem{remark}[theorem]{Remark}

\title{Infinitely many associated primes of local cohomology modules of ramified regular local rings}
\author{Linquan Ma}
\address{Department of Mathematics, Purdue University, West Lafayette, IN 47907, USA}
\email{ma326@purdue.edu}
\thanks{The author was supported by NSF grants DMS-2302430 and DMS-2424441. I would like to thank Bhargav Bhatt, Anurag Singh, and Uli Walther for many enjoyable discussions on local cohomology.}
\begin{document}

\begin{abstract}
We construct examples of local cohomology modules of ramified regular local rings with infinitely many associated primes and infinite Bass numbers.
\end{abstract}

\maketitle

A question of Lyubeznik asks whether the local cohomology modules of regular rings have only finitely many associated primes and finite Bass numbers \cite[Remark 3.7 and Question 3.8]{LyubeznikDmodules}. There have been extensive works on this question. In particular, the question has an affirmative answer for regular local rings containing a field or of unramified mixed characteristic \cite{HunekeSharp,LyubeznikDmodules,LyubeznikFinitentessUnramifiedCase,NunezBetancourtLocalCohomologySmallDimension}. See  \cite{NunezBetancourtRingsDifferentialType,BBLSZ1,CidRuizSmirnov,MurayamaAssociatedPrimesExcellent} for some other major progress.

Here we give a negative answer to Lyubeznik's question for (ramified) regular local rings of mixed characteristic, which also disproves \cite[Conjecture 5.2]{HunekeProblemsLocalCohomology}. We combine the examples studied in \cite{DattaSwitalaZhang} (see also \cite{HNBPWInjectiveDimension}) with the examples in \cite{KatzmanInfiniteAssociatedPrimes,SinghSwansonAssociatedPrimes}.

\subsection*{Construction} Let $\mathbb{Z}_2$ be the ring of $2$-adic integers and let $I\subseteq \mathbb{Z}_2[[\underline{x}]] :=\mathbb{Z}_2[[x_1,\dots, x_6]]$ be the monomial ideal corresponding to the standard triangulation of $\mathbb{R}\mathbb{P}^2$ as in \cite[Example 5.2]{SinghWaltherBockstein}:
\[
  I=( x_1x_2x_3,\;
    x_1x_2x_4,\;
    x_1x_3x_5,\;
    x_1x_4x_6,\;
    x_1x_5x_6,\;
    x_2x_3x_6,\;
    x_2x_4x_5,\;
    x_2x_5x_6,\;
    x_3x_4x_5,\;
    x_3x_4x_6
  ).
\]
It follows from \cite[Proposition 4.5 and Remark 4.3]{DattaSwitalaZhang} that 
$$H_I^4(\mathbb{Z}_2[[\underline{x}]])\cong \mathrm{Ann}_E(2) \cong E_{\mathbb{F}_2[[\underline{x}]]}(\mathbb{F}_2) \,\ \text{ and } \,\ H_I^5(\mathbb{Z}_2[[\underline{x}]])=0$$
where $E=E_{\mathbb{Z}_2[[\underline{x}]]}(\mathbb{F}_2)$ denotes the injective hull of the residue field of $\mathbb{Z}_2[[\underline{x}]]$.

Next, we recall the example of a local cohomology module of a hypersurface with infinitely many associated primes from \cite{SinghSwansonAssociatedPrimes} (we can also use the example in \cite{KatzmanInfiniteAssociatedPrimes}). By \cite[Theorem 4.1]{SinghSwansonAssociatedPrimes} (followed by a harmless completion map), we have that 
\begin{equation}
\label{eqn: SinghSwansonExample}
H_{(y_1, y_2)}^2\left(\frac{\mathbb{F}_2[[y_1,\dots,y_6]]}{(y_1^2y_3^2y_6 + y_1y_2y_3y_4y_5 + y_2^2y_4^2y_6)}\right) \tag{$\dagger$}
\end{equation}
has infinitely many associated primes.

%let $g(\underline{y}):=g(y_1,\dots,y_6)\subseteq \mathbb{F}_2[y_1,\dots,y_6]=:\mathbb{F}_2[\underline{y}]$ be Katzman's example so that $H_{(y_1,y_2)}^2(\mathbb{F}_2[\underline{y}]/g(\underline{y}))$ has infinitely many associated primes. We lift the coefficients of $g(y)$ from $\mathbb{F}_2$ to $\mathbb{Z}_2$ in an arbitrary way, and we denote that lift $f(\underline{y})\in \mathbb{Z}_2[\underline{y}]$.

Now we set $S=\mathbb{Z}_2[[\underline{x},\underline{y}]]:=\mathbb{Z}_2[[x_1,\dots,x_6, y_1,\dots,y_6]]$ and let
$$R:=S/(2+y_1^2y_3^2y_6 + y_1y_2y_3y_4y_5 + y_2^2y_4^2y_6).$$ 
Note that $R$ is a complete ramified regular local ring. Finally, we let $J :=IS + (y_1, y_2)S\subseteq S.$

\begin{claim}
$H_{J}^6(R)$ has infinitely many associated primes.
\end{claim}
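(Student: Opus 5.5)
We compute $H_J^6(R)$ through a spectral sequence that separates the $x$-variables from the $y$-variables. Put $A=\mathbb{Z}_2[[\underline{x}]]$, $B=\mathbb{Z}_2[[\underline{y}]]$ and $f=2+y_1^2y_3^2y_6+y_1y_2y_3y_4y_5+y_2^2y_4^2y_6\in B$, so that $R=S/fS$ and $S=A\widehat{\otimes}_{\mathbb{Z}_2}B$.

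\textbf{Step 1: the functor $H^\bullet_{IS}$ applied to $R$.} The ideal $IS$ is extended from $A$, and $S$ is flat over $A$. Hence $H^i_{IS}(S)\cong H^i_I(A)\otimes_A S$, and this vanishes for $i\neq 4$. Indeed, $H^i_I(A)=0$ for $i>5$ because $I$ is generated by $10$ monomials and in fact by cohomological dimension considerations, $H^5_I(A)=0$ by the construction, and for $i<4$ we have $\operatorname{ht} I=4$ together with the explicit computation in \cite{DattaSwitalaZhang}. I take as given from \cite{DattaSwitalaZhang} that $H^i_I(A)$ is nonzero only for $i=4$; if not, the lower degrees must be tracked, but only the top terms matter for the argument below. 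We then get
\[
H^4_{IS}(S)\cong \operatorname{Ann}_E(2)\otimes_A S\cong H^6_{\mathfrak m_x}(\mathbb{F}_2[[\underline x]])\otimes_{\mathbb{F}_2}\mathbb{F}_2[[\underline y]].
\]
More precisely, this is $H^6_{(\underline x)}\big(\mathbb{F}_2[[\underline x,\underline y]]\big)$, an $S$-module killed by $2$.

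Now apply $H^\bullet_{IS}$ to the short exact sequence $0\to S\xrightarrow{f}S\to R\to0$. Multiplication by $f$ on $H^4_{IS}(S)$ coincides with multiplication by $\bar f=y_1^2y_3^2y_6+y_1y_2y_3y_4y_5+y_2^2y_4^2y_6$, since $2$ acts as zero. This map is injective, because $\bar f$ is a nonzerodivisor on $H^6_{(\underline x)}(\mathbb{F}_2[[\underline x,\underline y]])$: the module is a direct limit of copies of $\mathbb{F}_2[[\underline y]]$ built from the $x$-variables. It follows that
\[
H^i_{IS}(R)=0 \text{ for } i\ne 4,\qquad H^4_{IS}(R)\cong H^6_{(\underline x)}\big(\mathbb{F}_2[[\underline x,\underline y]]/\bar f\big)=:M .
\]

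\textbf{Step 2: the Grothendieck spectral sequence.} Since $J=IS+(y_1,y_2)$, there is a spectral sequence
\[
H^p_{(y_1,y_2)}H^q_{IS}(R)\Rightarrow H^{p+q}_J(R).
\]
By Step 1 it collapses to the single row $q=4$, giving $H^6_J(R)\cong H^2_{(y_1,y_2)}(M)$. Set $T=\mathbb{F}_2[[\underline y]]/(\bar f)$, which is exactly the ring appearing in \eqref{eqn: SinghSwansonExample}. Because $H^6_{(\underline x)}$ of the power series ring over $T$ is the direct limit $\varinjlim T[[\underline x]]/(x_1^t,\dots,x_6^t)$, and local cohomology commutes with direct limits, we have
\[
H^6_J(R)\cong H^6_{(\underline x)}\Big(H^2_{(y_1,y_2)}\big(T[[\underline x]]\big)\Big)\cong H^2_{(y_1,y_2)}(T)\otimes_{\mathbb{F}_2}H^6_{(\underline x)}(\mathbb{F}_2[[\underline x]]),
\]
where the last step needs a suitable completed or flat base change.

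\textbf{Step 3: transferring associated primes (the main obstacle).} Write $N=H^2_{(y_1,y_2)}(T)$ and $E'=E_{\mathbb{F}_2[[\underline x]]}(\mathbb{F}_2)$. I would show that for every $\mathfrak p\in\operatorname{Ass}_T N$, the prime $\mathfrak P=\mathfrak pS+(\underline x)+2S$ lies in $\operatorname{Ass}_S(N\otimes E')$. Take $u\in N$ with $\operatorname{ann}_T u=\mathfrak p$, and let $s\in E'$ be the socle generator. Then $\operatorname{ann}(u\otimes s)\supseteq \mathfrak p+(\underline x)+(2)$. For the reverse inclusion, note that $E'$ is free over $\mathbb{F}_2$ and its socle $\mathbb{F}_2 s$ splits off as an $\mathbb{F}_2$-summand. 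Given an element $g=\sum_\alpha g_\alpha(\underline y)\underline x^\alpha$ of $S/2$, we have $g\cdot(u\otimes s)=g_0u\otimes s$, because every term with $\alpha\neq0$ kills $s$. So $g$ annihilates $u\otimes s$ precisely when $g_0\in\mathfrak p$, and therefore $\operatorname{ann}(u\otimes s)=\mathfrak P$. The primes $\mathfrak P$ are pairwise distinct for distinct $\mathfrak p$, so infinitely many associated primes of $N$ produce infinitely many associated primes of $H^6_J(R)$, viewed as an $R$-module after pulling back along $S\to R$.

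The delicate points are in Steps 1 and 2. In Step 1 one must confirm the vanishing of $H^i_I(A)$ for $i<4$; alternatively, one can observe that lower rows only affect degrees below $6$ through differentials, although $H^3$ of a $2$-generated ideal does vanish. In Step 2 one must justify the tensor identification concretely, via the direct-limit description, rather than through an abstract base change.
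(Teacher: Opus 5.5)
Your overall strategy works, but it is not the paper's route, and Step 1 contains a false assertion that your own fallback has to replace.

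\textbf{The flaw in Step 1.} The ideal $I$ has height $3$, not $4$: for instance $I\subseteq (x_3,x_4,x_6)$, and no two variables suffice. Hence $H^3_I(A)\neq 0$. Moreover $R$ is faithfully flat over $A$, by the local flatness criterion, since $f$ becomes the nonzerodivisor $\bar f$ on $S/\mathfrak{m}_AS=\mathbb{F}_2[[\underline y]]$. So $H^3_{IS}(R)\cong H^3_I(A)\otimes_A R\neq 0$, and the spectral sequence does not collapse to the row $q=4$.

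The repair you sketch at the end is exactly right and should be the argument. Since $E_2^{p,q}=H^p_{(y_1,y_2)}H^q_{IS}(R)=0$ for $p\ge 3$, rows $q\le 3$ have no terms of total degree $6$. Also, every differential $d_r$ leaving the $(2,4)$ spot lands in a column $p\ge 4$, where everything vanishes.

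\textbf{The rows above $q=4$.} Your route genuinely needs the rows $q=5,6$ to vanish, because $E_2^{0,6}$, $E_2^{1,5}$ and $d_2\colon E_2^{0,5}\to E_2^{2,4}$ all bear on $H^6_J(R)$. So your assertion $H^i_I(A)=0$ for $i\ge 6$ is actually used. This goes beyond what the paper cites, and ``generated by $10$ monomials'' only bounds the cohomological dimension by $10$.

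The vanishing is true, but it has to be proved. One way: from $0\to A\xrightarrow{2}A\to A/2\to 0$, multiplication by $2$ is bijective on $H^i_I(A)$ for $i\ge 6$, because $I$ has cohomological dimension at most $4$ in characteristic $2$. On the other hand, $H^i_I(A)[1/2]=0$ for $i\ge 4$, because $I$ has cohomological dimension $3$ in characteristic $0$. Together these give $H^i_I(A)=0$ for $i\ge 6$.

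\textbf{Comparison with the paper.} The paper first applies the K\"unneth-type isomorphism over $S$, where $H^p_{(y_1,y_2)}$ of each $H^q_I(A)\otimes_A S$ vanishes for $p\neq 2$. This gives $H^6_J(S)\cong E_{\mathbb{F}_2[[\underline x]]}(\mathbb{F}_2)\otimes_{\mathbb{F}_2}H^2_{(y_1,y_2)}(\mathbb{F}_2[[\underline y]])$ and $H^7_J(S)=0$. Only then does it pass to $R$, using one long exact sequence in $J$-cohomology plus right exactness of $H^2_{(y_1,y_2)}$. That order uses only $H^4_I(A)\cong\mathrm{Ann}_E(2)$ and $H^5_I(A)=0$, and is blind to every other $H^q_I(A)$.

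You instead pass to $R$ first, inside $IS$-cohomology. Your injectivity argument for $\bar f$ on the free $\mathbb{F}_2[[\underline y]]$-module $H^6_{(\underline x)}(\mathbb{F}_2[[\underline x,\underline y]])$ is fine. You then reach $J$ through the composite-functor spectral sequence over $R$, and this is what forces the row bookkeeping above.

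Both routes arrive at $H^6_J(R)\cong E_{\mathbb{F}_2[[\underline x]]}(\mathbb{F}_2)\otimes_{\mathbb{F}_2}H^2_{(y_1,y_2)}(T)$. Your Step 3, which computes the annihilator of $u\otimes s$ with $s$ the socle generator, is a correct and more explicit version of the step the paper calls clear.
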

\begin{proof}
First of all, it is well-known and easy to show that for any ring $A$ and any ideal $\mathfrak{a}\subseteq A$, we have $H^{i+2}_{\mathfrak{a}+(z_1, z_2)}(A[[z_1, z_2]])\cong H_{\mathfrak{a}}^i(A) \otimes_{\mathbb{Z}}H_{(z_1, z_2)}^2(\mathbb{Z}[[z_1,z_2]])$ (for example, we can apply \cite[Discussion 4.12]{EnescuHochster} twice). Applying this followed by the flat base change of adjoining more indeterminates, we see that by  construction ($\mathbb{Z}_2[[\underline{y}]]= \mathbb{Z}_2[[y_1,\dots,y_6]]$ and $\mathbb{F}_2[[\underline{y}]]= \mathbb{F}_2[[y_1,\dots,y_6]]$ in what follows):
$$H_J^{6}(S) \cong H_{I}^4(\mathbb{Z}_2[[\underline{x}]])\otimes_{\mathbb{Z}}H_{(y_1,y_2)}^2(\mathbb{Z}[[\underline{y}]])\cong E_{\mathbb{F}_2[[\underline{x}]]}(\mathbb{F}_2) \otimes_{\mathbb{F}_2}H_{(y_1,y_2)}^2(\mathbb{F}_2[[\underline{y}]])$$
as $S$-modules, and
$$H^{7}_J(S)\cong H_{I}^5(\mathbb{Z}_2[[\underline{x}]])\otimes_{\mathbb{Z}}H_{(y_1,y_2)}^2(\mathbb{Z}[[\underline{y}]])=0.$$
Denote $f(\underline{y}):= y_1^2y_3^2y_6 + y_1y_2y_3y_4y_5 + y_2^2y_4^2y_6$ and consider part of the long exact sequence
$$H_J^6(S) \xrightarrow{\cdot (2+f(\underline{y}))} H_J^6(S) \to H_J^6(R) \to H_J^7(S)=0.$$
It follows that $H_J^6(R)$ is the cokernel of the multiplication by $(2+f(\underline{y}))$ map on $H_J^6(S)$. But since $H_J^{6}(S)\cong E_{\mathbb{F}_2[[\underline{x}]]}(\mathbb{F}_2) \otimes_{\mathbb{F}_2}H_{(y_1,y_2)}^2(\mathbb{F}_2[[\underline{y}]])$ and in particular $H_J^6(S)$ is annihilated by $(2)$, multiplication by $(2+f(\underline{y}))$ is the same as multiplication by $f(\underline{y})$ and we have
\begin{align*}
H_J^6(R) \cong & E_{\mathbb{F}_2[[\underline{x}]]}(\mathbb{F}_2) \otimes_{\mathbb{F}_2}\mathrm{coker}\big(H_{(y_1,y_2)}^2(\mathbb{F}_2[[\underline{y}]])\xrightarrow{\cdot f(\underline{y})}H_{(y_1,y_2)}^2(\mathbb{F}_2[[\underline{y}]])\big) \\
\cong & E_{\mathbb{F}_2[[\underline{x}]]}(\mathbb{F}_2) \otimes_{\mathbb{F}_2} H_{(y_1,y_2)}^2\big(\mathbb{F}_2[[\underline{y}]]/f(\underline{y})\big)
\end{align*}
as $S$-modules (and thus also as $R$-modules), where we used the fact that the cohomological dimension of $(y_1, y_2)$ is two for the second isomorphism.
%Finally, since the image of $f(\underline{y})$ in $\mathbb{F}_2[\underline{y}]$ is $g(\underline{y})$ and we obviously have 
%$$\mathrm{coker}(H_{(y_1,y_2)}^2(\mathbb{F}_2[\underline{y}])\xrightarrow{\cdot g(\underline{y})}H_{(y_1,y_2)}^2(\mathbb{F}_2[\underline{y}]))\cong H_{(y_1,y_2)}^2(\mathbb{F}_2[\underline{y}]/g(\underline{y})).$$
%It follows that 
%$$H_J^6(R)\cong E_{\mathbb{F}_2[\underline{x}]}(\mathbb{F}_2) \otimes_{\mathbb{F}_2} H_{(y_1,y_2)}^2(\mathbb{F}_2[\underline{y}]/g(\underline{y})).$$

Since $H_J^6(R)$ is annihilated by $(2)$, it is enough to show that $H_J^6(R)$ has infinitely many associated primes as a module over $R/(2)$. But we have 
$$R/(2) \cong \frac{\mathbb{F}_2[[y_1,\dots,y_6]]}{(y_1^2y_3^2y_6 + y_1y_2y_3y_4y_5 + y_2^2y_4^2y_6)}[[x_1,\dots, x_6]].$$
By the explicit description of $H_J^6(R)$ above, it is clear that for each of the infinitely many associated prime $Q$ of the local cohomology module in (\ref{eqn: SinghSwansonExample}), 
$Q+(x_1,\dots,x_6)$ is an associated prime of $H_J^6(R)$ over $R/(2)$.  In particular, $H_J^6(R)$ has infinitely many associated primes.
%and it is clear by construction that the action of $R/(2)$ on 
%$$H_J^6(R) \cong E_{\mathbb{F}_2[[\underline{x}]]}(\mathbb{F}_2) \otimes_{\mathbb{F}_2} H_{(y_1, y_2)}^2\left(\frac{\mathbb{F}_2[[y_1,\dots,y_6]]}{(y_1^2y_3^2y_6 + y_1y_2y_3y_4y_5 + y_2^2y_4^2y_6)}\right)$$
%It is now clear that for any associated prime $Q$ of $H_{(y_1,y_2)}^2(\mathbb{F}_2[\underline{y}]/g(\underline{y}))$ in the ring $\mathbb{F}_2[\underline{y}]$, $Q$ corresponds uniquely to a prime $\widetilde{Q}\subseteq \mathbb{Z}_2[\underline{y}]$ containing $(2)$ and $\widetilde{Q}S+(\underline{x})S$ is an associated prime of $H_J^6(S)$. In particular, $H_J^6(S)$ has infinitely many associated primes.
\end{proof}

The same construction also leads to examples of local cohomology modules of ramified regular local rings with infinite dimensional socles (and in particular infinite Bass numbers),\footnote{Wenliang Zhang kindly informed us that he has independently obtained the same example in Claim 2.} which disproves \cite[Conjecture 4.3 and 4.4]{HunekeProblemsLocalCohomology}. Let $T=\mathbb{Z}_2[[\underline{x}, \underline{y}]] := \mathbb{Z}_2[[x_1,\dots,x_6, y_1,\dots,y_4]]$ and $$R:=T/(2+y_1y_3+y_2y_4) \,\  \,\ J:=IT+(y_1,y_2)T\subseteq T.$$

\begin{claim}
$H_{J}^6(R)$ has infinite dimensional socle.
\end{claim}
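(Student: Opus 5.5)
The plan is to run the argument of Claim 1 again, with Hartshorne's classical non-cofinite example playing the role of the Singh--Swanson example.

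\textbf{Step 1: compute $H_J^6(T)$ and $H_J^7(T)$.} Apply the isomorphism $H^{i+2}_{\mathfrak{a}+(z_1,z_2)}(A[[z_1,z_2]])\cong H^i_{\mathfrak{a}}(A)\otimes_{\mathbb{Z}} H^2_{(z_1,z_2)}(\mathbb{Z}[[z_1,z_2]])$, followed by flat base change for the extra variables $y_3,y_4$. Using the facts about $H_I^4$ and $H_I^5$ recalled above, this gives
$$H_J^6(T)\cong E_{\mathbb{F}_2[[\underline{x}]]}(\mathbb{F}_2)\otimes_{\mathbb{F}_2} H^2_{(y_1,y_2)}(\mathbb{F}_2[[y_1,\dots,y_4]])$$
and $H_J^7(T)=0$.

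\textbf{Step 2: pass to $R$.} Put $g:=y_1y_3+y_2y_4$. The long exact sequence for multiplication by $2+g$ shows that $H_J^6(R)$ is the cokernel of $\cdot(2+g)$ on $H_J^6(T)$. Since $H_J^6(T)$ is killed by $2$, this map is just $\cdot g$. Because $(y_1,y_2)$ has cohomological dimension two, we get
$$H_J^6(R)\cong E_{\mathbb{F}_2[[\underline{x}]]}(\mathbb{F}_2)\otimes_{\mathbb{F}_2} M,\qquad M:=H^2_{(y_1,y_2)}\big(B\big),\quad B:=\mathbb{F}_2[[y_1,\dots,y_4]]/(y_1y_3+y_2y_4).$$
This is an isomorphism of modules over $R/(2)\cong B[[x_1,\dots,x_6]]$.

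\textbf{Step 3: reduce the socle to $\mathrm{soc}_B(M)$.} Since $2$ lies in the maximal ideal of $R$ and kills $H_J^6(R)$, the socle over $R$ equals the socle over $B[[\underline{x}]]$. The $x_i$ act only on the $E$ factor, and tensoring over the field $\mathbb{F}_2$ is exact. Hence
$$0:_{E\otimes M}x_i=(0:_E x_i)\otimes M.$$
Iterating over $x_1,\dots,x_6$ gives $(0:_E(\underline{x}))\otimes M=\mathbb{F}_2\otimes M\cong M$. Then taking elements killed by $(y_1,\dots,y_4)$ gives
$$\mathrm{soc}\,H_J^6(R)\cong \mathrm{soc}_B(M)=\mathrm{Hom}_B(\mathbb{F}_2,M).$$

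\textbf{Step 4: invoke Hartshorne's example.} In characteristic $2$ the signs do not matter, so $B$ is the completion of Hartshorne's ring $k[y_1,\dots,y_4]/(y_1y_3-y_2y_4)$ with $k=\mathbb{F}_2$. Hartshorne showed that $\mathrm{Hom}(k,H^2_{(y_1,y_2)}(-))$ of this ring is infinite dimensional, i.e.\ the module is not cofinite. This is the non-cofinite phenomenon we need, with $(y_1,y_2)$ a height-one ideal generated by two elements.

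\textbf{Main obstacle.} The hard step is transferring Hartshorne's computation, which is usually stated for the graded/polynomial (or localized) ring, to the complete ring $B$. Completion is flat, so $H^2_{(y_1,y_2)}(B)$ is the base change of the polynomial-ring module. Since the socle is $\mathrm{Hom}$ out of a finitely presented module, $\mathrm{Hom}_B(k,M)\cong \mathrm{Hom}(k,M_0)\otimes \widehat{(\cdot)}$, where $M_0$ denotes the local cohomology module of the (localized) polynomial ring. This is still infinite dimensional.

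If that transfer feels delicate, the fallback is to exhibit the infinitely many socle elements directly, as in Hartshorne's argument. For $n\ge 1$ take the Čech classes
$$\left[\frac{y_3^{\,n}}{y_1^{\,n}y_2}\right]\quad\text{(or the analogous family)} \quad\text{in } M.$$
One then checks that they are killed by the maximal ideal and are linearly independent, using the $\mathbb{Z}^2$-grading on $B$.
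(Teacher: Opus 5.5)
Your argument is correct and is essentially the paper's proof: you make the same reduction as in Claim~1 to $E_{\mathbb{F}_2[[\underline{x}]]}(\mathbb{F}_2)\otimes_{\mathbb{F}_2}H^2_{(y_1,y_2)}(B)$ and then invoke Hartshorne's example, and your Step 3 together with the flat base-change computation (with $B_0$ the localized polynomial ring, $\mathrm{Hom}_B(\mathbb{F}_2,M)\cong \mathrm{Hom}_{B_0}(\mathbb{F}_2,M_0)\otimes_{B_0}B\cong \mathrm{Hom}_{B_0}(\mathbb{F}_2,M_0)$, the last step because that module is killed by the maximal ideal) spells out what the paper calls a ``harmless completion''. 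The optional fallback is wrong as written, though: since $y_1y_3=y_2y_4$ in $B$, one has $y_3^n/(y_1^ny_2)=y_2^{n-1}y_4^n/y_1^{2n}\in B_{y_1}$, so all those classes vanish in $M$; a correct family of linearly independent socle elements is $\bigl[y_4^{\,n}/(y_1^{\,n+1}y_2)\bigr]$, $n\ge 0$ (equivalently $\bigl[y_3^{\,n}/(y_1y_2^{\,n+1})\bigr]$).
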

\begin{proof}
By the same argument as above we have
$$H_J^6(R)\cong E_{\mathbb{F}_2[[\underline{x}]]}(\mathbb{F}_2) \otimes_{\mathbb{F}_2} H_{(y_1,y_2)}^2(\mathbb{F}_2[[\underline{y}]]/(y_1y_3+y_2y_4)).$$
By \cite[Section 3]{HartshorneAffineDualityCofiniteness} (and note that the result there still holds after a harmless completion), we know that $H_{(y_1,y_2)}^2(\mathbb{F}_2[[\underline{y}]]/(y_1y_3+y_2y_4))$ has infinite dimensional socle as an $\mathbb{F}_2[[\underline{y}]]$-module. It follows that $H_J^6(R)$ has infinite dimensional socle as an $R$-module.
\end{proof}

\begin{remark} We point out the following:
\begin{enumerate}[leftmargin=*]
\item[(a)] We can also obtain examples that are finite type over a DVR or over $\mathbb{Z}$, simply take $$R=W[x_1,\dots,x_6, y_1, \dots, y_6]/(2+y_1^2y_3^2y_6 + y_1y_2y_3y_4y_5 + y_2^2y_4^2y_6)$$
    where $W=\mathbb{Z}$, $\mathbb{Z}_{(2)}$, or the ring of Witt vectors of any perfect field of characteristic $2$. In fact, in all these cases we can localize $R$ at $(2, \underline{x},\underline{y})$ and the proof above will not be altered much to show that the associated primes of $H_J^6(R)$ that are contained in this maximal ideal are infinite.
\item[(b)] Similar examples exist for every prime residue characteristic $p$: we simply  replace the monomial ideal corresponding to the triangulation of $\mathbb{R}\mathbb{P}^2$ by the monomial ideal corresponding to the triangulation of the $p$-fold dunce cap as in \cite[Example 5.11]{SinghWaltherBockstein} (the argument in \cite[Proposition 4.5]{DattaSwitalaZhang} goes through and shows that the critical local cohomology is annihilated by $p$, and is in fact isomorphic to $\mathrm{Ann}_E(p)$). We leave the details to the readers.
\end{enumerate}
\end{remark}

\bibliographystyle{alpha}
\bibliography{Bib}

@article {DattaSwitalaZhang,
    AUTHOR = {Datta, Rankeya and Switala, Nicholas and Zhang, Wenliang},
     TITLE = {Annihilators of {$\mathcal{D}$}-modules in mixed characteristic},
   JOURNAL = {Math. Res. Lett.},
  FJOURNAL = {Mathematical Research Letters},
    VOLUME = {30},
      YEAR = {2023},
    NUMBER = {3},
     PAGES = {721--732},
      ISSN = {1073-2780,1945-001X},
   MRCLASS = {14F10 (13A35 13D45 14G45)},
  MRNUMBER = {4696428},
MRREVIEWER = {Sebasti\'{a}n\ Olano},
       DOI = {10.4310/mrl.2023.v30.n3.a5},
       URL = {https://doi.org/10.4310/mrl.2023.v30.n3.a5},
}

@article {SinghSwansonAssociatedPrimes,
    AUTHOR = {Singh, Anurag K. and Swanson, Irena},
     TITLE = {Associated primes of local cohomology modules and of
              {F}robenius powers},
   JOURNAL = {Int. Math. Res. Not.},
  FJOURNAL = {International Mathematics Research Notices},
      YEAR = {2004},
    NUMBER = {33},
     PAGES = {1703--1733},
      ISSN = {1073-7928,1687-0247},
   MRCLASS = {13D45},
  MRNUMBER = {2058025},
MRREVIEWER = {Manuel\ Blickle},
       DOI = {10.1155/S1073792804133424},
       URL = {https://doi.org/10.1155/S1073792804133424},
}

@article {HartshorneAffineDualityCofiniteness,
    AUTHOR = {Hartshorne, Robin},
     TITLE = {Affine duality and cofiniteness},
   JOURNAL = {Invent. Math.},
  FJOURNAL = {Inventiones Mathematicae},
    VOLUME = {9},
      YEAR = {1969/70},
     PAGES = {145--164},
      ISSN = {0020-9910,1432-1297},
   MRCLASS = {14.55},
  MRNUMBER = {257096},
MRREVIEWER = {D.\ S.\ Rim},
       DOI = {10.1007/BF01404554},
       URL = {https://doi.org/10.1007/BF01404554},
}

@article {KatzmanInfiniteAssociatedPrimes,
    AUTHOR = {Katzman, Mordechai},
     TITLE = {An example of an infinite set of associated primes of a local
              cohomology module},
   JOURNAL = {J. Algebra},
  FJOURNAL = {Journal of Algebra},
    VOLUME = {252},
      YEAR = {2002},
    NUMBER = {1},
     PAGES = {161--166},
      ISSN = {0021-8693,1090-266X},
   MRCLASS = {13D45},
  MRNUMBER = {1922391},
MRREVIEWER = {I-Chiau\ Huang},
       DOI = {10.1016/S0021-8693(02)00032-7},
       URL = {https://doi.org/10.1016/S0021-8693(02)00032-7},
}

@article {SinghWaltherBockstein,
    AUTHOR = {Singh, Anurag K. and Walther, Uli},
     TITLE = {Bockstein homomorphisms in local cohomology},
   JOURNAL = {J. Reine Angew. Math.},
  FJOURNAL = {Journal f\"{u}r die Reine und Angewandte Mathematik. [Crelle's
              Journal]},
    VOLUME = {655},
      YEAR = {2011},
     PAGES = {147--164},
      ISSN = {0075-4102,1435-5345},
   MRCLASS = {13D45 (13F20 13F55 13H05)},
  MRNUMBER = {2806109},
MRREVIEWER = {D.-M.\ Popescu},
       DOI = {10.1515/CRELLE.2011.039},
       URL = {https://doi.org/10.1515/CRELLE.2011.039},
}

@article {LyubeznikDmodules,
    AUTHOR = {Lyubeznik, Gennady},
     TITLE = {Finiteness properties of local cohomology modules (an
              application of {$D$}-modules to commutative algebra)},
   JOURNAL = {Invent. Math.},
  FJOURNAL = {Inventiones Mathematicae},
    VOLUME = {113},
      YEAR = {1993},
    NUMBER = {1},
     PAGES = {41--55},
      ISSN = {0020-9910,1432-1297},
   MRCLASS = {13D45 (13H05 14B12)},
  MRNUMBER = {1223223},
MRREVIEWER = {P.\ Schenzel},
       DOI = {10.1007/BF01244301},
       URL = {https://doi.org/10.1007/BF01244301},
}

@article {HunekeSharp,
    AUTHOR = {Huneke, Craig L. and Sharp, Rodney Y.},
     TITLE = {Bass numbers of local cohomology modules},
   JOURNAL = {Trans. Amer. Math. Soc.},
  FJOURNAL = {Transactions of the American Mathematical Society},
    VOLUME = {339},
      YEAR = {1993},
    NUMBER = {2},
     PAGES = {765--779},
      ISSN = {0002-9947,1088-6850},
   MRCLASS = {13D45},
  MRNUMBER = {1124167},
MRREVIEWER = {Gennady\ Lyubeznik},
       DOI = {10.2307/2154297},
       URL = {https://doi.org/10.2307/2154297},
}

@article{LyubeznikFinitentessUnramifiedCase,
    AUTHOR = {Lyubeznik, Gennady},
     TITLE = {Finiteness properties of local cohomology modules for regular
              local rings of mixed characteristic: the unramified case},
     NOTE = {Special issue in honor of Robin Hartshorne},
   JOURNAL = {Comm. Algebra},
  FJOURNAL = {Communications in Algebra},
    VOLUME = {28},
      YEAR = {2000},
    NUMBER = {12},
     PAGES = {5867--5882},
      ISSN = {0092-7872,1532-4125},
   MRCLASS = {13D45},
  MRNUMBER = {1808608},
MRREVIEWER = {David\ A.\ Jorgensen},
       DOI = {10.1080/00927870008827193},
       URL = {https://doi.org/10.1080/00927870008827193},
}

@article {HNBPWInjectiveDimension,
    AUTHOR = {Hern\'{a}ndez, Daniel J. and N\'{u}\~{n}ez-Betancourt, Luis
              and P\'{e}rez, Felipe and Witt, Emily E.},
     TITLE = {Lyubeznik numbers and injective dimension in mixed
              characteristic},
   JOURNAL = {Trans. Amer. Math. Soc.},
  FJOURNAL = {Transactions of the American Mathematical Society},
    VOLUME = {371},
      YEAR = {2019},
    NUMBER = {11},
     PAGES = {7533--7557},
      ISSN = {0002-9947,1088-6850},
   MRCLASS = {13A35 (13C11 13D45)},
  MRNUMBER = {3955527},
MRREVIEWER = {Geoffrey\ D.\ Dietz},
       DOI = {10.1090/tran/7310},
       URL = {https://doi.org/10.1090/tran/7310},
}

@incollection {HunekeProblemsLocalCohomology,
    AUTHOR = {Huneke, Craig},
     TITLE = {Problems on local cohomology},
 BOOKTITLE = {Free resolutions in commutative algebra and algebraic geometry
              ({S}undance, {UT}, 1990)},
    SERIES = {Res. Notes Math.},
    VOLUME = {2},
     PAGES = {93--108},
 PUBLISHER = {Jones and Bartlett, Boston, MA},
      YEAR = {1992},
      ISBN = {0-86720-285-8},
   MRCLASS = {13D45 (13H10 14B15)},
  MRNUMBER = {1165320},
MRREVIEWER = {D.-M.\ Popescu},
}

@article {NunezBetancourtLocalCohomologySmallDimension,
    AUTHOR = {N\'{u}\~{n}ez-Betancourt, Luis},
     TITLE = {Local cohomology modules of polynomial or power series rings
              over rings of small dimension},
   JOURNAL = {Illinois J. Math.},
  FJOURNAL = {Illinois Journal of Mathematics},
    VOLUME = {57},
      YEAR = {2013},
    NUMBER = {1},
     PAGES = {279--294},
      ISSN = {0019-2082,1945-6581},
   MRCLASS = {13D45},
  MRNUMBER = {3224571},
MRREVIEWER = {Majid\ Eghbali},
       URL = {http://projecteuclid.org/euclid.ijm/1403534496},
}

@article {NunezBetancourtRingsDifferentialType,
    AUTHOR = {N\'{u}\~{n}ez-Betancourt, Luis},
     TITLE = {On certain rings of differentiable type and finiteness
              properties of local cohomology},
   JOURNAL = {J. Algebra},
  FJOURNAL = {Journal of Algebra},
    VOLUME = {379},
      YEAR = {2013},
     PAGES = {1--10},
      ISSN = {0021-8693,1090-266X},
   MRCLASS = {13D45 (13N10)},
  MRNUMBER = {3019242},
MRREVIEWER = {Alberto\ F.\ Boix},
       DOI = {10.1016/j.jalgebra.2012.12.010},
       URL = {https://doi.org/10.1016/j.jalgebra.2012.12.010},
}

@article {BBLSZ1,
    AUTHOR = {Bhatt, Bhargav and Blickle, Manuel and Lyubeznik, Gennady and
              Singh, Anurag K. and Zhang, Wenliang},
     TITLE = {Local cohomology modules of a smooth {$\Bbb{Z}$}-algebra have
              finitely many associated primes},
   JOURNAL = {Invent. Math.},
  FJOURNAL = {Inventiones Mathematicae},
    VOLUME = {197},
      YEAR = {2014},
    NUMBER = {3},
     PAGES = {509--519},
      ISSN = {0020-9910,1432-1297},
   MRCLASS = {13D45},
  MRNUMBER = {3251828},
MRREVIEWER = {Le\ Xuan\ Dung},
       DOI = {10.1007/s00222-013-0490-z},
       URL = {https://doi.org/10.1007/s00222-013-0490-z},
}

@ARTICLE{MurayamaAssociatedPrimesExcellent,
       author = {{Murayama}, Takumi},
        title = "{Finiteness of associated primes for local cohomology modules of excellent locally unramified regular rings of finite Krull dimension}",
      journal = {arXiv e-prints},
         year = 2025,
        month = jun,
          eid = {arXiv:2506.17875},
        note = {arXiv:2506.17875},
          doi = {10.48550/arXiv.2506.17875},
archivePrefix = {arXiv},
       eprint = {2506.17875},
 primaryClass = {math.AC},
       adsurl = {https://ui.adsabs.harvard.edu/abs/2025arXiv250617875M}
}

@article {CidRuizSmirnov,
    AUTHOR = {Cid-Ruiz, Yairon and Smirnov, Ilya},
     TITLE = {Effective generic freeness and applications to local
              cohomology},
   JOURNAL = {J. Lond. Math. Soc. (2)},
  FJOURNAL = {Journal of the London Mathematical Society. Second Series},
    VOLUME = {110},
      YEAR = {2024},
    NUMBER = {4},
     PAGES = {Paper No. e12995, 31},
      ISSN = {0024-6107,1469-7750},
   MRCLASS = {13C10 (13B40 13D45 13P10)},
  MRNUMBER = {4801897},
MRREVIEWER = {Irena\ Swanson},
       DOI = {10.1112/jlms.12995},
       URL = {https://doi.org/10.1112/jlms.12995},
}
\end{document}